\newtheorem{theorem}{Theorem}[section]
\newtheorem{lemma}[theorem]{Lemma}
\theoremstyle{remark}
\newtheorem{remark}{Remark}[section]
\begin{document}

\title{Knots in homology lens spaces determined by their complements}

\author{Kazuhiro Ichihara}
\address{Department of Mathematics, College of Humanities and Sciences, Nihon University, 3-25-40 Sakurajosui, Setagaya-ku, Tokyo 156-8550, JAPAN}
\email{ichihara.kazuhiro@nihon-u.ac.jp}

\author{Toshio Saito}
\address{Department of Mathematics, Joetsu University of Education, 1 Yamayashiki, Joetsu 943-8512, JAPAN}
\email{toshio@juen.ac.jp}
\date{\today}

\subjclass[2020]{Primary 57K10, Secondary 57K31}

\keywords{knot complement, homology lens space}

\begin{abstract}
In this paper, we consider the knot complement problem for not null-homologous knots in homology lens spaces. 
Let $M$ be a homology lens space with $H_1(M; \mathbb{Z}) \cong \mathbb{Z}_p$ and $K$ a not null-homologous knot in $M$. 
We show that, $K$ is determined by its complement if $M$ is non-hyperbolic, $K$ is hyperbolic, and $p$ is a prime more than 7, or, if $M$ is actually a lens space $L(p,q)$ and $K$ represents a generator of $H_1(L(p,q))$. 
\end{abstract}

\maketitle

\section{Introduction}
If a pair of knots in a 3-manifold is equivalent, meaning that a homeomorphism of the ambient manifold takes one knot onto the other, then the complements of the two knots are homeomorphic. 
In general, its converse may not hold. 
In view of this, a knot $K$ in a 3-manifold $M$ is said to be \textit{determined by its complement} if the complement $M - K$ is not orientation-preservingly homeomorphic to the complement of any knot in $M$ unless it is orientation-preservingly equivalent to $K$. 

For the case of knots in the 3-sphere $S^3$, it had been conjectured that any non-trivial knot in $S^3$ is determined by its complement. 
As a question, it was stated by Tietze \cite[p.83]{Tietze} in 1908. 
About 80 years later, in 1989, Gordon and Luecke \cite{GordonLuecke} proved the conjecture affirmatively: 
If two knots in $S^3$ have homeomorphic complements, then they are equivalent (\cite[Theorem 1]{GordonLuecke}). 

Then, the question was generalized to the knots in general 3-manifolds. 
The following is now called the \textit{Oriented Knot Complement Conjecture}, stated in 
\cite[Conjecture 6.2.]{Gordon} and \cite[Problem 1.81(D)]{Kirby}: 
Any knot in a closed, oriented 3-manifold with the complement which is not a solid torus is determined by its complement. 
Throughout the sequel, for simplicity, we will always assume that no knots have solid torus exteriors. 

Note that the homeomorphism between the knot complements is assumed to preserve the orientations.
Otherwise the situation becomes more complicated. See the remark in the end of this section. 

The conjecture is verified to be true for knots with Seifert fibered exteriors in Seifert fibered spaces by Rong in \cite{Rong} and for non-hyperbolic knots in lens spaces, and moreover, in closed, atoroidal and irreducible Seifert fibered spaces by Matignon in \cite{Matignon2010}. 
In particular, all the non-hyperbolic knots except for the core knots (the cores of Heegaard solid tori) in lens spaces are determined by their complements. 
However it is still widely open in general. 

In these decades, the conjecture is mostly addressed by using Heegaard Floer homology developed by Ozsv\'{a}th and Szab\'{o}. 
In fact, an alternative proof the theorem of Gordon and Luecke (\cite[Theorem 1]{GordonLuecke}) can be given by using Heegaard Floer homology. 
See \cite[Section 4]{OzsvathSzabo} for example. 

Furthermore, the following result was obtained by Gainullin \cite{Gainullin}: 
Null-homologous knots in L-spaces are determined by their complements.
In particular, also are null-homologous knots in lens spaces (\cite[Theorem 8.2]{Gainullin}). 
Together with \cite[Corollary 8.3]{Gainullin}, which is also previously shown in \cite{Christensen} independently, 
it implies that if $p$ is square-free, then knots in a lens space $L(p,q)$ except for the core knots are determined by their complements.  

In this paper, as a generalization of the result above, we consider a \textit{homology lens space}, defined as a closed 3-manifold with homology groups isomorphic to those of a lens space, and knots not null-homologous in the ambient manifolds. 
(Here and throughout the rest of the paper, all homology is considered with $\mathbb{Z}$-coefficients.)
By refining the argument of \cite[Corollary 8.3]{Gainullin} and \cite{Christensen}, we show the following. 

\begin{theorem}\label{MainThm}
Let $M$ be a homology lens space with $H_1(M)  \cong \mathbb{Z}_p$ for a prime $p$, and $K$ a hyperbolic knot not null-homologous in $M$. 
If $M$ is non-hyperbolic and $p>8$, then $K$ is determined by its complement. 
\end{theorem}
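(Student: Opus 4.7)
The plan is to translate the hypothesis $M-K\cong M-K'$ (orientation-preservingly) into a Dehn filling problem on the common exterior $X=M\setminus N(K)=M\setminus N(K')$: writing $\mu,\mu'$ for the meridional slopes on $\partial X$ corresponding to $K$ and $K'$, one has $X(\mu)\cong X(\mu')\cong M$, and $K$ is equivalent to $K'$ in $M$ if and only if $\mu$ and $\mu'$ lie in the same orbit of the mapping class group $\mathrm{Mod}(X)$. Two ingredients will combine to force this.

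First, since $K$ is hyperbolic and $M$ is non-hyperbolic, both $\mu$ and $\mu'$ are exceptional slopes on the one-cusped hyperbolic manifold $X$, so the sharp Lackenby--Meyerhoff bound gives $\Delta(\mu,\mu')\le 8$; this is exactly where the hypothesis $p>8$ will be exploited. Second, Mayer--Vietoris on $M=X\cup N(K)$ together with $H_2(M)=0$ yields an injection $\mathbb{Z}^2\hookrightarrow H_1(X)\oplus\mathbb{Z}$ with cokernel $\mathbb{Z}_p$, so the torsion subgroup $T$ of $H_1(X)$ embeds into $\mathbb{Z}_p$; primality of $p$ forces $H_1(X)\cong\mathbb{Z}\oplus T$ with $T\in\{0,\mathbb{Z}_p\}$. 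Let $\lambda_r$ be the rational longitude of $X$ (the primitive slope with torsion image in $H_1(X)$) and $d\ge 1$ the divisibility of the image of $H_1(\partial X)$ in $H_1(X)/T\cong\mathbb{Z}$. A Smith normal form computation then yields
\[
|H_1(X(\alpha))|=d\cdot |T|\cdot\Delta(\alpha,\lambda_r)
\]
for every slope $\alpha$ whose filling is a rational homology sphere; specializing to $\alpha\in\{\mu,\mu'\}$ with $|H_1(M)|=p$ forces $d|T|\in\{1,p\}$ and $\Delta(\mu,\lambda_r)=\Delta(\mu',\lambda_r)=p/(d|T|)$.

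In the case $d|T|=1$, both $\mu$ and $\mu'$ sit at distance exactly $p$ from $\lambda_r$, and an elementary computation in $H_1(\partial X)\cong\mathbb{Z}^2$ shows that two distinct primitive slopes at common distance $p$ from a fixed slope are automatically at mutual distance a positive multiple of $p$. Combined with $\Delta(\mu,\mu')\le 8$ and $p>8$, this forces $\mu=\mu'$; the given homeomorphism of $X$ then extends across the meridional fillings to a self-homeomorphism of $M$ carrying $K$ to $K'$, and $K$ is determined by its complement in this case.

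The main obstacle is the degenerate case $d|T|=p$, in which $\Delta(\mu,\lambda_r)=1$ and the homological argument by itself cannot rule out a nontrivial $\mu'\ne\mu$ with $\Delta(\mu,\mu')\le 8$. This is precisely where the refinement of the arguments of \cite{Gainullin} and \cite{Christensen} enters: the exterior $X$ still determines the knot Floer complex of $K$, and comparing the Heegaard Floer correction terms of $M=X(\mu)=X(\mu')$ over all $\mathrm{Spin}^c$ structures produces arithmetic identities among the admissible slopes that, together with $\Delta(\mu,\mu')\le 8$ and $p>8$, force $\mu$ and $\mu'$ to coincide modulo $\mathrm{Mod}(X)$.
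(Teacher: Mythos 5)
Your overall strategy is the same as the paper's --- force the mutual distance of the two meridional slopes to be divisible by $p$ by a homological argument, then contradict the Lackenby--Meyerhoff bound $\Delta(\mu,\mu')\le 8$ using $p>8$ --- and your treatment of the case $d|T|=1$ is correct: there both meridians lie at distance exactly $p$ from the rational longitude $\lambda_r$, so if they were distinct their mutual distance would be a positive multiple of $p$, which is impossible. That case essentially recovers Theorem~\ref{Temp}(i) of the paper by an intrinsic computation in $H_1(X)$ instead of the paper's surgery presentation. The genuine gap is the case $d|T|=p$: what you offer there is not an argument. Asserting that a comparison of correction terms of $X(\mu)$ and $X(\mu')$ ``produces arithmetic identities that force $\mu$ and $\mu'$ to coincide'' names a hoped-for conclusion without producing the identities, and no such off-the-shelf statement exists at this level of generality; as written, the proof is incomplete precisely in the residual case.

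The missing idea is that you never used the hypothesis that $K$ is not null-homologous, and that hypothesis is exactly what eliminates the case $d|T|=p$. Intrinsically: since $b_1(X)=1$ and $\chi(X)=0$, one has $b_2(X)=0$, so Poincar\'e--Lefschetz duality gives $H_1(X,\partial X)\cong H^2(X)\cong T$, and the long exact sequence of the pair shows $\mathrm{coker}\bigl(H_1(\partial X)\to H_1(X)\bigr)\cong H_1(X,\partial X)\cong T$. On the other hand $[K]$ is the image of a curve on $\partial X$ dual to $\mu$, so $H_1(M)/\langle [K]\rangle\cong H_1(X)/i_*H_1(\partial X)$; hence the order of $[K]$ in $H_1(M)\cong\mathbb{Z}_p$ equals $p/|T|$. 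Since $p$ is prime and $[K]\ne 0$, this forces $|T|=1$, and then $d=1$ as well (the cokernel surjects onto $\mathbb{Z}/d$, so $d$ divides $|T|$). Thus only your good case $d|T|=1$ ever occurs, and the Floer discussion is unnecessary. The paper reaches the same point by a different route: by Luft--Sjerve and Guilbault, $M$ is $(-p/q)$-surgery on a knot $K_0$ in a homology sphere, and if $w$ denotes the linking number of $K$ with $K_0$, then $[K]=w[\mu]$ in $H_1(M)$, so ``not null-homologous'' becomes the arithmetic condition that $p$ does not divide $w$; the two-generator presentation of $H_1(M_K(n/n'))$ in Lemma~\ref{lem} then yields $p\mid n'=\Delta$ directly (Theorem~\ref{Temp}(i)), after which Lackenby--Meyerhoff finishes the proof exactly as you intend.
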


Due to the result of Gainullin \cite{Gainullin}, in addition to the above setting, if $M$ is an L-space, then the assumption of not null-homologousness can be removed. 
For example, for a prime $p>8$, if $M$ is a connected sum of a lens space $L(p,q)$ and the Poincar\'{e} homology sphere, then all the hyperbolic knots in $M$ are determined by their complements. 

When we restrict our attention to the knots, not only not null-homologous, but representing generators of the homology groups, we have the following. 

\begin{theorem}\label{MainThm2}
Let $M$ be a homology lens space with $H_1(M)  \cong \mathbb{Z}_p$ for a positive integer $p$, and $K$ a knot in $M$ representing a generator of $H_1(M)$. 
\begin{enumerate}
    \item If $M$ is a lens space, then $K$ is determined by its complement. 
    \item If $M$ is a spherical manifold with $p$ containing a prime factor at least 7 and $K$ is hyperbolic, then $K$ is determined by its complement. 
    \item If $M$ is non-hyperbolic with $p$ containing a prime factor at least 11  and $K$ is hyperbolic, then $K$ is determined by its complement. 
\end{enumerate}
\end{theorem}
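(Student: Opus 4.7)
The plan is to suppose we are given a knot $K$ representing a generator of $H_1(M) \cong \mathbb{Z}_p$, a second knot $K' \subset M$, and an orientation-preserving homeomorphism $\phi \colon M \setminus K \to M \setminus K'$, and to prove that $K$ and $K'$ are equivalent. Write $N$ for the common exterior and $T = \partial N$, so that $M = N(r) = N(r')$, where $r$ is the meridian of $K$ on $T$ and $r' = \phi^{-1}(\text{meridian of } K')$. It then suffices to arrange $r = r'$ after composing $\phi$ with a self-homeomorphism of $N$, for the resulting map will extend across the filling solid torus to an equivalence $(M,K) \to (M,K')$.

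First I would set up the homological bookkeeping. From the long exact sequence of $(M,N)$ one reads off
\[
0 \to \mathbb{Z}\langle \mu_K\rangle \to H_1(N) \to H_1(M) \to 0,
\]
so $H_1(N)$ is generated by the meridian $\mu_K$ together with a lift of a generator of $\mathbb{Z}_p$, subject to a single relation whose coefficient records the self-linking of $K$. Demanding that $N(r')$ is again a rational homology sphere with $H_1 \cong \mathbb{Z}_p$ confines $r'$ to a prescribed coset of $p \cdot H_1(T)$ in $H_1(T)$, so that the task is reduced to bounding $r' - r$ modulo $p$.

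Next I would invoke case-specific cosmetic surgery bounds. For part (1), with $M = L(p,q)$: if $K$ is non-hyperbolic then Matignon's theorem \cite{Matignon2010} settles it directly, and if $K$ is hyperbolic then $N$ is hyperbolic and both fillings have cyclic $\pi_1$, so the Cyclic Surgery Theorem of Culler--Gordon--Luecke--Shalen forces $\Delta(r,r') \le 1$, which together with the mod-$p$ constraint immediately gives $r = r'$. For part (2), with $M$ spherical and $K$ hyperbolic, I would combine the Finite Surgery Theorem of Boyer--Zhang with the refinements of Li--Ni and Ni--Zhang to bound $\Delta(r,r')$ by a constant at most $6$; the hypothesis that some prime factor of $p$ is at least $7$ then contradicts $r \neq r'$. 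For part (3), with $M$ non-hyperbolic and $K$ hyperbolic, I would assemble the cosmetic surgery bounds of Boyer--Zhang, Matignon--Sayari, Gordon--Wu, and subsequent authors to obtain $\Delta(r,r') \le 10$, again reaching a contradiction with the prime factor $\ge 11$ hypothesis.

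I expect the main obstacle to be case (3). The bound $\Delta(r,r') \le 10$ is not a single off-the-shelf theorem but has to be extracted by a careful case analysis on the JSJ and Seifert structure of $M$, and one must verify that none of the known exceptional cosmetic pairs (such as those arising from twisted $I$-bundles, Seifert fibrations with small base orbifolds, or toroidal graph manifold pieces) can occur under the extra hypothesis that the core knot represents a generator of $\mathbb{Z}_p$. A secondary check, once $r = r'$ is established, is that the induced self-homeomorphism of $N$ extends across the filling solid torus to an orientation-preserving homeomorphism of $M$ sending $K'$ onto $K$; this should follow from the uniqueness of the meridian in a hyperbolic knot exterior, essentially as in the proof of Theorem~\ref{MainThm}.
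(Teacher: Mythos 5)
Your overall strategy coincides with the paper's: constrain the distance $\Delta$ between the two meridional slopes homologically (divisibility by prime factors of $p$, using that $K$ generates $H_1(M)$), then play this against surgery-distance bounds, quoting Rong/Matignon/Gainullin to dispose of the residual cases in (1). However, there are two genuine gaps. The first is your case (3), which you yourself flag as the main obstacle: the bound $\Delta(r,r')\le 10$ is not proved, and the proposed assembly from Boyer--Zhang, Matignon--Sayari, Gordon--Wu with a JSJ case analysis is exactly the hard part you leave open (in particular the small Seifert fibered filling case has no elementary classical bound of this kind). The missing idea is that this is an off-the-shelf theorem: since the knot exterior is a one-cusped hyperbolic manifold and both fillings give the non-hyperbolic manifold $M$, both slopes are exceptional, and Lackenby--Meyerhoff give $\Delta\le 8$ outright, which is why the paper's hypothesis is ``prime factor at least $11$''. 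With the Lackenby--Meyerhoff bound no analysis of exceptional cosmetic pairs is needed at all, because the argument is by contradiction with $\Delta\neq 0$ divisible by a prime $\ge 11$; likewise your closing ``secondary check'' about extending the homeomorphism is unnecessary in this formulation.

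The second gap is in the homological step, which you assert rather than prove: it is not true in general that requiring $H_1(N(r'))\cong\mathbb{Z}_p$ confines $r'$ modulo $p$ -- this fails for knots that are not generators (the paper's separate, more delicate treatment of $L(4,q)$ in Section 3 exists precisely because for $p=4$ and linking number $w\equiv 2\pmod 4$ the homology condition alone does not force $4\,|\,\Delta$). Your sketch never indicates where the generator hypothesis enters. The paper's route (Lemma~\ref{lem} and Theorem~\ref{Temp}(ii)) is to realize $M$ as $(-p/q)$-surgery on a knot $K_0$ in a homology sphere via Luft--Sjerve and Guilbault, isotope $K$ into $E(K_0)$, and record the linking number $w$; the generator hypothesis yields $(w,p)=1$, and only then does the presentation of $H_1(M_K(n/n'))$ force $p_0\,|\,\Delta$ for every prime factor $p_0$ of $p$. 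If you want to argue purely from the exact sequence of the pair as you propose, you must still prove that $H_1(N)$ is infinite cyclic with the meridian mapping to $\pm p$ times a generator (equivalently, reprove the $(w,p)=1$ statement), which is the actual content. Your case (1) and case (2) arguments are otherwise fine and essentially the paper's (the paper uses Boyer--Zhang's $\Delta\le 5$ directly; the Li--Ni and Ni--Zhang refinements you cite concern knots in $S^3$ and are not needed).
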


Actually, the theorems above are obtained as corollaries of the next theorem. 

\begin{theorem}\label{Temp}
Let $M$ be a homology lens space with $H_1(M) \cong \mathbb{Z}_p$ for a positive integer $p$. 
Let $K$, $K'$ be not null-homologous knots in $M$ with meridians $m$, $m'$ and the exteriors $E(K;M)$, $E(K';M)$ respectively. 
Suppose that there exists a homeomorphism $h$ from $E(K;M)$ to $E(K';M)$, and let $\Delta$ be the distance of the slopes on $\partial E(K;M)$ represented by $m$ and $h^{-1}(m')$. 
(i) If $p$ is prime, then  $p\,|\,\Delta$ holds. 
(ii) If $K$ represents a generator of $H_1(M)$, then $p_0 \,|\,\Delta$ holds for any prime factor $p_0$ of $p$. 
\end{theorem}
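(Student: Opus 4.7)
The plan is to combine the algebraic structure of first homology under Dehn filling with a finer invariant of $M$ in the cases where $H_1$ alone is insufficient. Throughout, write $E := E(K;M)$ for brevity.

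First, the long exact sequence of the pair $(M,E)$ yields
\[
0\longrightarrow\mathbb{Z}\langle m\rangle\longrightarrow H_1(E)\longrightarrow H_1(M)\longrightarrow 0,
\]
so $H_1(E)\cong\mathbb{Z}\oplus T$ with $|T|$ dividing $p$, and the image of $m$ in the free summand equals $p/|T|$. Fix a longitude $\lambda$ on $\partial E$; then $\alpha:=h^{-1}(m')=a\mu+b\lambda$ with $\mu=m$, $\gcd(a,b)=1$, and $\Delta=|b|$. Because $h$ identifies the $\alpha$-filling of $E$ with the $m'$-filling of $E(K';M)$ (which is $M$), one has
\[
H_1(E)/\langle[\alpha]\rangle\ \cong\ H_1(M)\ \cong\ \mathbb{Z}_p.
\]
Expanding $[\alpha]=a[m]+b[\lambda]$ in $\mathbb{Z}\oplus T$ and reading off the Smith normal form of the resulting presentation matrix gives the basic numerical constraints on $a$, $b$, and the invariants of $H_1(E)$.

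Under hypothesis (ii), $[\lambda]$ projects to a generator of $H_1(M)$, so $\{m,\lambda\}$ generates $H_1(E)$ and the Smith-form bookkeeping is tractable. For each prime $p_0\mid p$, if $p_0\nmid|T|$ then reducing modulo $p_0$ makes $m$ vanish in $H_1(E;\mathbb{F}_{p_0})$ while $[\lambda]$ remains nonzero; the equation $H_1(E;\mathbb{F}_{p_0})/\langle[\alpha]\rangle\cong\mathbb{F}_{p_0}$ then forces $b\equiv 0\pmod{p_0}$. Part (i) is then a formal consequence of (ii): a not-null-homologous knot in a $3$-manifold with $H_1\cong\mathbb{Z}_p$, $p$ prime, automatically represents a generator, so (ii) applies with $p_0=p$.

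The main obstacle is the remaining subcase $p_0\mid|T|$, where the pure homological data admits too many fillings with $H_1\cong\mathbb{Z}_p$. Here I would appeal to a finer invariant preserved by both fillings: most naturally the linking form on $H_1(M)$, or, in the spirit of \cite{Gainullin} and \cite{Christensen}, the Heegaard Floer $d$-invariants. The standard Dehn-filling formula expresses $\ell_{M_\alpha}$ in terms of $a$, $b$, and the torsion linking data of $E$; equating this to $\ell_M$ and exploiting the cyclicity of $H_1(M)\cong\mathbb{Z}_p$ yields the additional congruence $b\equiv 0\pmod{p_0}$. Combining the two subcases proves $p_0\mid\Delta$ for every prime factor $p_0$ of $p$, establishing (ii), and hence (i).
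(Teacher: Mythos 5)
Your setup and the easy half of the argument are fine: the exact sequence $0\to\mathbb{Z}\langle m\rangle\to H_1(E)\to H_1(M)\to 0$ (injectivity uses $H_2(M)=0$, which holds for a homology lens space), the identification $H_1(E)/\langle[\alpha]\rangle\cong\mathbb{Z}_p$, the observation that under (ii) the classes $[m]$ and $[\lambda]$ generate $H_1(E)$, the mod-$p_0$ computation when $p_0\nmid|T|$, and the reduction of (i) to (ii) (for $p$ prime any nonzero class is a generator) are all correct. But the proof is not complete: the subcase $p_0\mid|T|$, which you yourself flag as ``the main obstacle,'' is only addressed by an appeal to ``the linking form or the $d$-invariants'' together with an unspecified ``standard Dehn-filling formula'' that is claimed, without any computation, to yield $b\equiv 0\pmod{p_0}$. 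As written this is a hope, not an argument, and it is exactly the point where the whole content of the theorem sits: when $[m]$ does not die in $H_1(E;\mathbb{F}_{p_0})$, the quotient $H_1(E;\mathbb{F}_{p_0})/\langle a[m]+b[\lambda]\rangle$ can be $\mathbb{F}_{p_0}$ with no constraint on $b$, so some genuinely new input is required.

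What actually closes the gap is the observation that under hypothesis (ii) this subcase is vacuous: if $K$ represents a generator of $H_1(M)$, then $H_1(E)$ is torsion-free, so $|T|=1$ and only your ``good'' case occurs. One can see this by duality (the torsion of $H_1(E)$ is isomorphic to the torsion of $H_1(E,\partial E)\cong H_1(M,K)\cong\mathbb{Z}_p/\langle[K]\rangle=0$), or as the paper does: using \cite{LuftSjerve} and \cite{Guilbault}, write $M$ as $(-p/q)$-surgery on a knot $K_0$ in a homology sphere, let $w$ be the linking number of $K$ with $K_0$, and present $H_1$ of every filling of $E$ explicitly in terms of $p$, $q$, $w$, $n$, $n'$; the hypothesis that $K$ generates (resp.\ is not null-homologous with $p$ prime) translates into $(w,p)=1$, which is precisely the statement that no torsion interferes, and the two congruences coming from $H_1(M_K(n/n'))\cong\mathbb{Z}_p$ then force $p_0\mid n'=\Delta$. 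So your homological strategy is essentially the right one and parallels the paper's, but you must either prove the torsion-freeness statement or supply the explicit surgery presentation; the proposed detour through linking forms or Heegaard Floer $d$-invariants is both unsubstantiated and unnecessary.
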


Very recently, for lens spaces, independently from our work, Ito obtained a further extension of the result of Gainullin \cite{Gainullin} (in private communication).
By applying our methods, we have a simple alternative proof of the following, which is a part of his results. 

\begin{theorem}\label{L4q}
In a lens space $L(4,q)$ for an odd integer $q$, all the knots are determined by their complements. 
\end{theorem}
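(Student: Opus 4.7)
My plan is to dispatch all but one case via classical results and Theorem~\ref{MainThm2}, and then handle the remaining case by combining an explicit homological enumeration of filling slopes with the Cyclic Surgery Theorem.

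First I would reduce. If $K$ is non-hyperbolic, Matignon's theorem on non-hyperbolic knots in lens spaces gives the conclusion. If $K$ is null-homologous, Gainullin's theorem applies since $L(4,q)$ is an L-space. If $[K]$ is a generator of $H_1(L(4,q)) = \mathbb{Z}_4$, Theorem~\ref{MainThm2}(1) applies. What remains is the case that $K$ is hyperbolic and $[K]$ has order $2$ in $\mathbb{Z}_4$.

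For that case, suppose there is an orientation-preserving homeomorphism $h\colon E(K;L(4,q)) \to E(K';L(4,q))$, and let $\Delta$ be the distance between $m$ and $h^{-1}(m')$ on $\partial E(K;L(4,q))$. A Mayer-Vietoris computation gives $H_1(E(K;L(4,q))) \cong \mathbb{Z}\mu \oplus \mathbb{Z}_2\tau$, and since Dehn filling along $m$ recovers $\mathbb{Z}_4$, a Smith-normal-form argument forces $m \mapsto (2,1)$; the same structure of $E(K';L(4,q))$ (transferred via $h$) forces $[K']$ also to have order $2$. After choosing a longitude $\ell$ of $K$ with image $(2,0)$, the rational longitude $\lambda = m - \ell$ has image $(0,1)$. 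A direct case check on the image $(2c, c+d)$ of a slope $cm + d\lambda$ then shows that the fillings producing $H_1 \cong \mathbb{Z}_4$ are exactly those with $c = \pm 1$ and $d$ even, so any two such slopes have even pairwise distance. On the other hand, the Cyclic Surgery Theorem of Culler-Gordon-Luecke-Shalen gives $\Delta \le 1$, since $E(K;L(4,q))$ is hyperbolic and both slopes yield the cyclic manifold $L(4,q)$. Combining these two bounds forces $\Delta = 0$, so $h$ sends $m$ to $m'$ up to sign and extends to a homeomorphism $L(4,q) \to L(4,q)$ carrying $K$ to $K'$.

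The critical technical step is the identification of the rational longitude image as the nontrivial element of the $\mathbb{Z}_2$ summand, which drives the parity argument; this is what I expect will require the most bookkeeping with Mayer-Vietoris and a careful choice of longitude. The odd-$q$ hypothesis enters only via $\gcd(4,q) = 1$, needed for $L(4,q)$ to be a lens space.
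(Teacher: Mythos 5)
Your plan is correct, and it reaches the conclusion by a genuinely different route from the paper. The paper reduces via Rong \cite{Rong} (Seifert fibered exteriors) and Gainullin \cite{Gainullin} (null-homologous knots), uses the Cyclic Surgery Theorem \cite{CGLS} only to force the surgery slope to be integral, and then derives a parity contradiction from the explicit surgery description of Lemma~\ref{lem}: realizing $L(4,q)$ as $(-4/q)$-surgery on the unknot gives a presentation matrix whose determinant forces the integral slope $n$ to be even (using $w\equiv 2 \pmod 4$ and $q$ odd), while the mod-$4$ congruences of Lemma~\ref{lem} force $n$ to be odd. You instead work intrinsically in $H_1(E(K;L(4,q)))\cong\mathbb{Z}\oplus\mathbb{Z}_2$, classify all slopes whose fillings have first homology $\mathbb{Z}_4$, observe that any two of them have even distance, and then get $\Delta=0$ from \cite{CGLS}; your reduction of the other cases (Matignon \cite{Matignon2010} for non-hyperbolic knots, Gainullin for null-homologous ones, Theorem~\ref{MainThm2}(1) for generators) is also legitimate, though the paper's use of Rong plus \cite{CGLS} lets it avoid invoking hyperbolicity at all. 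The one step you flag but do not prove --- that a longitude can be chosen with image $(2,0)$, equivalently that the rational longitude maps to the nontrivial element of the $\mathbb{Z}_2$ summand --- is true and can be closed in one line: the cokernel of $H_1(\partial E(K))\to H_1(E(K))$ is isomorphic to $H_1(E(K),\partial E(K))\cong H^2(E(K))\cong \mathbb{Z}_2$ (Lefschetz duality and universal coefficients, since $b_2(E(K))=0$), so the image of the boundary is the index-two subgroup $2\mathbb{Z}\oplus\mathbb{Z}_2$, which together with $[m]=(2,1)$ yields a longitude of image $(2,0)$; alternatively one can read it off from the surgery presentation in Lemma~\ref{lem}, where $[\ell]=w[\mu]$ with $w\equiv 2\pmod 4$. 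With that supplied, your argument is complete; what it buys is a coordinate-free proof that does not need the homology-sphere surgery description, while the paper's computation is more elementary bookkeeping and generalizes directly to the congruence statements of Theorem~\ref{Temp}.
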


\begin{remark}
In \cite{Mathieu90} and \cite{Mathieu92}, Mathieu gave the first examples of pairs of knots in some Seifert fibered spaces with orientation-reversing homeomorphic (Seifert fibered) complements. 
Such examples were extended and classified by Rong in \cite{Rong}, and in \cite{Matignon2010}, Matignon gave a complete classification of non-hyperbolic knots in lens spaces not determined by unoriented knot complements. 
In \cite{BleilerHodgsonWeeks}, Bleiler, Hodgson and Weeks presented the first such hyperbolic example in a lens space. 
Recently, by generalizing their example, the first example of such hyperbolic knots in a hyperbolic 3-manifold was given by the first author and Jong in \cite{IchiharaJong}. 
\end{remark}

\section{Proofs}

In this section, we give proofs of Theorems~\ref{MainThm}, \ref{MainThm2}, and \ref{Temp} given in Section 1. 
Before starting proofs, we prepare some terminologies. 

The following operation is called \textit{Dehn surgery on a knot $K$ along slope $\gamma$}. 
Given a knot $K$ in a closed oriented 3-manifold $M$ and a slope $\gamma$, construct a new manifold $M_K( \gamma )$ by gluing a solid torus to the knot exterior $E(K;M) = \mathrm{cl}(M - N(K))$, that is, $M_K(\gamma) = E(K;M) \cup ( D^2 \times S^1) $, where the slope $\gamma$ represents the curve identified with $ \partial D^2$. 
If a meridian-longitude coordinates is fixed on $\partial N(K)$, then such a slope is parametrized by $\mathbb{Q} \cup \{ 1/0 \}$, where the meridian of $K$ corresponds to $1/0$. 
In that case, if the slope $\gamma$ is identified with $p/q$, then $M_K( \gamma )$ is also denoted by $M_K( p/q )$, and the Dehn surgery is called the $(p/q)$-surgery. 

We first prepare the following lemma. 

\begin{lemma}\label{lem}
Let $M$ be a homology lens space with $H_1(M) \cong \mathbb{Z}_p$ for a positive integer $p$. 
Let $K$ be a knot not null-homologous in $M$. 
Then, there exist a non-zero integer $w$ not a multiple of $p$ and an integer $q$ coprime to $p$ such that, with a suitable longitude of $K$, the manifold $M_K(n/n')$ obtained by the $(n/n')$-surgery on $K$ has the first homology group $H_1(M_K(n/n'))$ presented as follows. 
\[
H_1(M_K(n/n')) = \langle [m], [\mu] \ | \ n[m] + n'w [\mu] =0, \ - qw[m] + p [\mu] =0 \rangle.
\]
Moreover, if $H_1(M_K(n/n')) \cong \mathbb{Z}_p$, then there also exist integers $\alpha$ and $\beta$ with $0 \le \alpha,\beta \le p-1$ and $(\alpha, \beta) = 1$ if $\alpha, \beta$ are both non-zero or $(p, \alpha)=1$ (resp. $(p,\beta)=1$) if $\beta =0$ (resp. $\alpha=0$) such that 
\begin{eqnarray}
\label{eq2}
 n \alpha + n'w \beta &\equiv& 0\ \  (\bmod\  p),\ \mathit{and}\\
\label{eq3} 
qw \alpha &\equiv& 0\ \  (\bmod\  p). 
\end{eqnarray}
In particular, if $K$ represents a generator of $H_1(M)$, $w$ can be taken so that 
\begin{eqnarray}
\label{eq4}
(w, p) = 1. 
\end{eqnarray}
\end{lemma}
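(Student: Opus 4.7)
The plan is to compute $H_1(E(K;M))$ from a Mayer--Vietoris sequence, extract a two-generator presentation in which $[m]$ and a chosen class $[\mu]$ play distinguished roles, and then obtain $H_1(M_K(n/n'))$ by appending the Dehn filling relation. Applying Mayer--Vietoris to $M = E \cup_T N(K)$, where $E = E(K;M)$ and $T = \partial N(K)$, together with $H_2(M) = 0$, yields
\[
0 \to H_1(T) \to H_1(E) \oplus H_1(N(K)) \to H_1(M) \to 0,
\]
from which $H_1(E)$ has rank one and the meridian class $[m]$ has infinite order in $H_1(E)$ (otherwise the Dehn filling recovering $M$ could not produce a finite $H_1$).

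Next I would pick $[\mu] \in H_1(E)$ that projects to a generator of $H_1(M) \cong \mathbb{Z}_p$, so that $p[\mu]$ lies in $\langle [m] \rangle$ and one may write $p[\mu] = c[m]$ for some integer $c$. For any initial longitude $\ell$, the class $[\ell]$ maps to $[K] = w \cdot (\text{generator})$ in $H_1(M)$ for some $w \not\equiv 0 \pmod p$ (since $K$ is not null-homologous), so $[\ell] - w[\mu] \in \langle[m]\rangle$; replacing $\ell$ by $\ell + jm$ for a suitable integer $j$ gives $[\ell] = w[\mu]$ in $H_1(E)$, realizing the ``suitable longitude'' in the statement. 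When $[K]$ generates $H_1(M)$, $w$ is automatically coprime to $p$, which yields \eqref{eq4}.

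The next step, which I expect to be the main obstacle, is to rewrite the integer relation $p[\mu] = c[m]$ in the form $p[\mu] = qw[m]$ with $\gcd(q,p) = 1$. The Mayer--Vietoris computation identifies the torsion subgroup of $H_1(E)$ as cyclic of order $\gcd(w, p)$, and a determinant count on the presentation matrix forces $\gcd(c, p) = \gcd(w, p)$. After adjusting $[\mu]$ by a torsion element and choosing a compatible integer representative of $w$ modulo $p$, one can realize $c = qw$ with $q$ coprime to $p$; the delicate point is arranging these adjustments simultaneously while preserving $[\ell] = w[\mu]$.

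Finally, $M_K(n/n')$ is the Dehn filling of $E$ along the slope $nm + n'\ell$, which appends the relation $n[m] + n'[\ell] = n[m] + n'w[\mu] = 0$; combined with $-qw[m] + p[\mu] = 0$, this yields the displayed presentation. For the assertion on $\alpha, \beta$, when $H_1(M_K(n/n')) \cong \mathbb{Z}_p$ the presentation describes a cyclic group of order $p$, and there must exist $\alpha, \beta$ in the stated range (with coprimality coming from primitivity of the representing vector in $\mathbb{Z}^2$) such that $\alpha[m] + \beta[\mu]$ vanishes in the quotient; substituting into the two defining relations then yields the congruences \eqref{eq2} and \eqref{eq3}.
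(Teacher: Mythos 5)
Your intrinsic route is genuinely different from the paper's (which never works inside $M$ directly: it invokes Luft--Sjerve to get a homology equivalence $M\to L(p,q)$ and Guilbault to realize $M$ as $(-p/q)$-surgery on a knot $K_0$ in a homology sphere $\Sigma$, takes $w=\operatorname{lk}(K,K_0)$ and the preferred longitude of $K$ viewed in $\Sigma$, and reads the presentation off from $H_1$ of the link exterior, which is $\mathbb{Z}^2$). But your version has a real gap exactly at the step you flag as the main obstacle. Mayer--Vietoris gives you the rank of $H_1(E)$, the fact that $[m]$ has infinite order, and $H_1(M)\cong H_1(E)/\langle[m]\rangle$, hence a presentation $H_1(E)=\langle[m],[\mu]\mid p[\mu]=c[m]\rangle$ for some integer $c$; it gives no relation whatsoever between $c$ and $w$. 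The claim that $\mathrm{Tors}\,H_1(E)$ has order $\gcd(w,p)$ --- equivalently $\gcd(c,p)=\gcd(w,p)$, which is precisely what you need in order to write $c\equiv qw\pmod p$ with $(q,p)=1$ --- is a duality statement, not a consequence of the sequence you wrote: $c/p\in\mathbb{Q}/\mathbb{Z}$ is (up to sign) the torsion linking form evaluation $\lambda([K],\overline{[\mu]})$, and one needs non-degeneracy of $\lambda$ (or an equivalent Poincar\'e--Lefschetz argument) to force the order of $\lambda([K],\cdot)$, namely $p/\gcd(c,p)$, to equal the order $p/\gcd(w,p)$ of $[K]$. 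Until you supply that input, $c$ and $w$ are unrelated and the displayed presentation is unproved; the paper avoids the issue entirely because in its model $c=qw$ holds on the nose. (Conversely, the point you call delicate is not: once $\gcd(c,p)=\gcd(w,p)$ is known, fix the generator $\overline{[\mu]}$, choose $q$ with $qw\equiv c\pmod p$, replace $\mu$ by $\mu+tm$ so that $p[\mu]=qw[m]$ exactly, and re-choose the longitude by a multiple of $m$ to restore $[\ell]=w[\mu]$.)

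Two further points. First, your final step is mis-stated: in the lemma, $\alpha$ and $\beta$ are the images of $[m]$ and $[\mu]$ under an isomorphism $H_1(M_K(n/n'))\to\mathbb{Z}_p$, and the congruences (\ref{eq2}), (\ref{eq3}) come from applying that isomorphism to the two relators; they are not obtained from a primitive vector $(\alpha,\beta)$ with $\alpha[m]+\beta[\mu]=0$ in the quotient, and ``substituting'' such a vector into the relations does not yield (\ref{eq2}) and (\ref{eq3}). This is easily repaired, but as written it is the wrong characterization of $\alpha,\beta$. Second, in the paper $q$ is not an arbitrary unit modulo $p$: it is the parameter of the lens space to which $M$ is homology equivalent. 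Your construction would only produce some unit $q$, which satisfies the lemma as literally stated, but loses the identification that the paper later exploits in the proof of Theorem~\ref{L4q} (where the parity of $q$ for $L(4,q)$ matters).
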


\begin{proof}
Let $M$ be a homology lens space with $H_1(M) \cong \mathbb{Z}_p$ for a positive integer $p$. 
It is shown in \cite{LuftSjerve} that $M$ is $\mathbb{Z}$-homology equivalent to a lens space $L(p,q)$ for coprime integers $p,q$, i.e., 
there is a map $f:M \to L(p,q)$ which induces $\mathbb{Z}$-homology isomorphisms in all dimensions.
It then follows from \cite[Theorem 3.1]{Guilbault} that there exist a homology sphere $\Sigma$ and a knot $K_0$ in $\Sigma$ on which the $(-p/q)$-surgery yields $M$, that is, $\Sigma_{K_0} (-p/q) \cong M$. 
Here we take the meridian-preferred longitude system $(\mu, \lambda)$ on $\partial E(K_0;\Sigma)$ as follows. 
Let $\mu$ be an oriented meridian of $K_0$, which represents a generator of $H_1(E(K_0;\Sigma)) \cong \mathbb{Z}$. 
Since $\Sigma$ is a homology sphere, $K_0$ bounds a compact, connected, orientable surface, say $F$, in $\Sigma$. 
Such a surface $F$ is called a Seifert surface of $K_0$ and $\lambda=F\cap \partial E(K_0;\Sigma)$ is called the preferred longitude of $K_0$. 
Note that $[\lambda]=0$ in $H_1(E(K_0;\Sigma))$. 
We may suppose that $[\mu]\cdot [\lambda]=1$. 

Let $K$ be a knot which is not null-homologous in $M$. 
We isotope $K$ into $E(K_0;\Sigma)$ and define an oriented meridian $m$ and the preferred longitude $\ell$ of $K$, in the same way as for $K_0$, by regarding $K$ is a knot in $\Sigma$. 
Suppose that $K$ and $K_0$ have the linking number $w$ in $\Sigma$ (which means the algebraic intersection number of $K$ and a Seifert surface $F$ of $K_0$). 
Then $[\lambda] = w [m]$ and $[\ell] = w [\mu]$ holds in $H_1(E(K_0;\Sigma))$. 
In particular, $p [\mu] - q [\lambda] = p [\mu] - qw[m]$ holds in $H_1(E(K_0;\Sigma))$. 
Note that, since we assume that $K$ is not null-homologous in $M$, $p \not| \ w$ holds. 

Now we calculate $H_1 (M_K(n/n')) $ by considering the $(-p/q)$-surgery on $K_0$ and the $(n/n')$-surgery on $K$.
We see that the $(-p/q)$-surgery on $K_0$ induces a relation $p [\mu] - qw[m] = 0$ and 
the $(n/n')$-surgery on $K$ induces a relation $n[m] + n'w [\mu] = 0$ in $H_1 (M_K(n/n')) $. 
Therefore we have the following presentation. 
\[
H_1(M_K(n/n')) = \langle [m], [\mu] \ | \ n[m] + n'w [\mu] =0, \ - qw[m] + p [\mu] =0 \rangle.
\]

We now assume $H_1(M_K(n/n')) \cong \mathbb{Z}_p$. Then, there is an isomorphism $\varphi$ from $H_1(M_K(n/n'))$ to 
$\mathbb{Z}_p= \langle x \ | \ px = 0 \rangle$ such that $\varphi([m])=\alpha x$ and $\varphi([\mu])=\beta x$, 
where $0 \le \alpha,\beta \le p-1$. 
Moreover, if $\alpha, \beta \ne 0$, then $(\alpha, \beta)=1$, and 
if $\beta =0$ (resp. $\alpha=0$), then $(p, \alpha)=1$ (resp. $(p,\beta)=1$) because $\varphi$ is surjective. 
Note that 
\begin{eqnarray*}
\varphi (n[m] + n'w [\mu])  = (n \alpha + n'w \beta) x = 0,\\
\varphi (- qw[m] + p [\mu])  = (- qw \alpha + p \beta) x = 0.  
\end{eqnarray*}
These imply that we have the equations $(1)$ and $(2)$ in Lemma \ref{lem}. 

To obtain the final conclusion, we further assume that $K$ represents a generator of $H_1(M)$. 
Recall that $[K] = [ \ell] = w [\mu]$ in $H_1(E(K_0;\Sigma))$ and $p [\mu ] =0$ in $H_1(M)$. 
Suppose for a contradiction that $(w, p) \ne 1$, and set $c=\gcd(w, p)$. 
Then $w = w' c$ and $p = p' c$ for integers $p'(>1)$ and $w'(>1)$ respectively.   
Then 
\[
p' [K] = p' w [\mu] = p' w' c [\mu] = w' p [\mu] = 0 
\]
in $H_1(M)$. 
Since we suppose $c > 1$ and hence $p' < p$, this contradicts that $K$ represents a generator of $H_1(M)$. 
\end{proof}

\begin{proof}[Proof of Theorem~\ref{Temp}]
Let $M$ be a homology lens space with $H_1(M) \cong \mathbb{Z}_p$ for a positive integer $p$, and let $K$ be a not null-homologous knot in $M$. 
We set a meridian-longitude system $(m,\ell)$ for $K$ as in the proof of Lemma~\ref{lem}. 

Let $K'$ be a not null-homologous knot in $M$ with the exterior $E(K';M)$. 
Suppose that there exists a homeomorphism $h$ from the exterior $E(K;M)$ of $K$ to $E(K';M)$. 
Let $m'$ be a meridian of $K'$, and take the slope represented by $h^{-1}(m')$ on $\partial E(K;M)$, and assume that it corresponds to $n/n' \in \mathbb{Q} \cup \{ 1/0 \}$. 
Then we see that the $(n/n')$-surgery on $K$ in $M$ yields $M$ again. 
That is, $M_K(n/n') \cong M$, in particular, $H_1 (M_K(n/n')) \cong H_1 (M) \cong \mathbb{Z}_p$ holds. 
Since $m$ and $h^{-1}(m')$ corresponds to $1/0$ and $n/n'$ with respect to the meridian-longitude system $(m,\ell)$, the distance $\Delta$ of the slopes represented by $m$ and $h^{-1}(m')$ is equal to $1\cdot n' - n\cdot 0=n'$. 
If $K'$ is equivalent to $K$, then the distance $\Delta=n'$ is 0, and so, $p\, |\, \Delta$ holds. 
Thus, in the following, we suppose that $K'$ is not equivalent to $K$, i.e., $n' \ne 0$. 

Since $M_K(n/n')$ is assumed to be homeomorphic to $M$ and $H_1(M) \cong \mathbb{Z}_p$, we have the equations $(1)$ and $(2)$ in Lemma~\ref{lem}. 
Since we assume that $K$ is not null-homologous in $M$, the following also holds: 
\begin{eqnarray}
\label{eq8}
p \not| \ w. 
\end{eqnarray}

To prove the assertion (i), we assume that $p$ is a prime. 
If $\alpha \ne 0$, then the equation (\ref{eq3}) implies $w \equiv 0\ (\bmod\  p)$ because $p$ and $q$ are relatively prime. 
This contradicts the equation (\ref{eq8}). 
Thus we see that $\alpha = 0$ and hence $\beta \ne 0$. 
Then we have  $n'w \equiv 0\ (\bmod\  p)$ by the equation (\ref{eq2}) and hence  $p\,|\,n'$ by the expression (\ref{eq8}). 
Consequently we have  $p\,|\,\Delta$, that is, the distance $\Delta$ of the slopes represented by $m$ and $h^{-1}(m')$ satisfies $p\,|\,\Delta$. 

To prove the assertion (ii), we assume that $K$ represents a generator of $H_1(M)$. 
Let $p_0$ be a prime factor of $p$. 
Since we have the equations (\ref{eq2}) and (\ref{eq3}) in Lemma~\ref{lem}, for $p_0$, we also have the following. 
\begin{eqnarray}
\label{eq9}
 n \alpha + n'w \beta &\equiv& 0\ \  (\bmod\  p_0), \\
\label{eq10} 
qw \alpha &\equiv& 0\ \  (\bmod\  p_0). 
\end{eqnarray}
If $\alpha \not\equiv 0\ (\bmod\  p_0)$, then the equation (\ref{eq10}) implies $w \equiv 0\ (\bmod\  p_0)$ because $p$ and $q$ are relatively prime. 
This contradicts the equation (\ref{eq4}) in Lemma~\ref{lem}. 
Thus we see that $\alpha \equiv 0\ (\bmod\  p_0)$ and hence $\beta \not\equiv 0\ (\bmod\  p_0)$ by $(\alpha, \beta) =1$ if $\alpha \ne 0$ or by $(p,\beta)=1$ if $\alpha=0$ by Lemma~\ref{lem}. 
Then we have  $n'w \equiv 0\ (\bmod\  p_0)$ by the equation (\ref{eq9}). 
Hence by the equation (\ref{eq4}) in Lemma~\ref{lem}, we see that 
\begin{eqnarray}
\label{eq11}
p_0\,|\,n' 
\end{eqnarray}
for any prime factor $p_0$ of $p$. 
\end{proof}

\begin{proof}[Proof of Theorem~\ref{MainThm}]
Let $M$ be a non-hyperbolic homology lens space with $H_1(M) \cong \mathbb{Z}_p$ for a prime $p>8$. 
Suppose that a pair of hyperbolic, not null-homologous knots in $M$ have homeomorphic complements. 
Let $h$ be the homeomorphism between the exteriors of the knots. 
Then, by Theorem~\ref{Temp} (i), for the meridians $m$, $m'$ of the knots, the distance $\Delta$ of the slopes represented by $m$ and $h^{-1}(m')$ satisfies $p\,|\,\Delta$. 
That is, the distance is a multiple of $p>8$. 
Assume for the contrary that the two knots are inequivalent. 
Then the distance is not zero, and so, it has to be greater than 8. 
However, since $M$ is non-hyperbolic and the knots are hyperbolic, the distance must be at most 8 by the result of Lackenby and Meyerhoff \cite{LackenbyMeyerhoff}. 
A contradiction occurs. 
\end{proof}

\begin{proof}[Proof of Theorem~\ref{MainThm2}]
Let $M$ be a homology lens space with $H_1(M)  \cong \mathbb{Z}_p$ for a positive integer $p$, 
and let $K$ be a knot in $M$ representing a generator of $H_1(M)$. 
Suppose that there exists a knot $K'$ in $M$ which are not equivalent to $K$ but $E(K';M) \cong E(K;M)$. 
Let $h$ be the homeomorphism between the exteriors. 
Then, by Theorem~\ref{Temp} (ii), for the meridians $m$, $m'$ of the knots, the distance $\Delta$ of the slopes represented by $m$ and $h^{-1}(m')$ satisfies $p_0\,|\,\Delta $ for any prime factor $p_0$ of $p$. 

To prove $(1)$ of Theorem~\ref{MainThm2}, suppose that $M$ is a lens space. 
It is known that $(1)$ of Theorem~\ref{MainThm2} has been proven for knots with Seifert fibered exteriors in \cite[Theorem 1]{Rong}. 
It also follows from \cite[Theorem 8.2]{Gainullin} that $(1)$ of Theorem~\ref{MainThm2}  holds for null-homologous knots. 
Hence we may assume that $K$ is not null-homologous and its exterior is not a Seifert fibered space. 
Then $\Delta = 1$ by the Cyclic Surgery Theorem \cite{CGLS}. This contradicts the expression (\ref{eq11}).  

We next show $(2)$ of Theorem~\ref{MainThm2}. 
Suppose that  $M$ is spherical and $K$ is hyperbolic. 
Then it follows from \cite[Theorem 1.1(1)]{BoyerZhang} that the distance $\Delta$ is at most five, i.e., $\Delta \le 5$.  
By the expression (\ref{eq11}), we see that prime factors of $p$ are  $2$, $3$ or $5$. 

We similarly obtain $(3)$ of Theorem~\ref{MainThm2}.  
Suppose that  $M$ is non-hyperbolic and $K$ is hyperbolic. 
Then it follows from \cite{LackenbyMeyerhoff} that the distance $\Delta$ is at most eight, i.e., $\Delta \le 8$.  
By the expression (\ref{eq11}), we see that prime factors of $p$ are  $2$, $3$, $5$ or $7$. 
\end{proof}

\section{$L(4,q)$}

In this section, we give a proof of Theorem~\ref{L4q}. 

\begin{proof}[Proof of Theorem~\ref{L4q}]
Let $K_0$ be the trivial knot in the 3-sphere $S^3$. 
As in the previous section, we take the meridian-preferred longitude system $(\mu, \lambda)$ on $\partial E(K_0;S^3)$. 
Then a lens space $L(p,q)$ is obtained by the $(-p/q)$-surgery on $K_0$. 

Let $K$ be a knot in $L(p,q)$. It is known that Theorem \ref{L4q} has been proven for knots with Seifert fibered exteriors in \cite[Theorem 1]{Rong}. 
It also follows from \cite[Theorem 8.2]{Gainullin} that Theorem \ref{L4q}  holds for null-homologous knots. 
Hence we may assume that $K$ is not null-homologous and its exterior is not a Seifert fibered space. 

Let $K'$ be a not null-homologous knot in $L(p,q)$ with the exterior $E(K';L(p,q))$. 
Suppose that there exists a homeomorphism $h$ from the exterior $E(K;L(p,q))$ of $K$ to $E(K';M)$. 
Let $m'$ be a meridian of $K'$, and take the slope represented by $h^{-1}(m')$ on $\partial E(K;M)$, and assume that it corresponds to $n/n' \in \mathbb{Q} \cup \{ 1/0 \}$. 
Since we assume that $E(K;L(p,q))$ is not Seifert fibered, the surgery has to be integral, i.e., we may assume that $n'=1$ by the Cyclic Surgery Theorem \cite{CGLS}. 
Hence we see that the $n$-surgery on $K$ in $L(p,q)$ yields $L(p,q)$ again. 
That is, $M_K(n/1) \cong L(p,q)$, in particular, $H_1 (M_K(n/1)) \cong H_1 (L(p,q)) \cong \mathbb{Z}_p$ holds. 

Then, by Lemma~\ref{lem}, we have the presentation of $H_1 (M_K(n/1)) $ as follows. 
\[
H_1(M_K(n/1)) = \langle [m], [\mu] \ | \ n[m] + w [\mu] =0, \ - qw[m] + p [\mu] =0 \rangle.
\]
This implies that $H_1(L(p,q))$ has a presentation matrix 
\[
\begin{pmatrix}
n & w \\
-qw & p \\
\end{pmatrix}.
\]
Since the order of $H_1(L(p,q))$ is the modulus of the determinant of the matrix, 
we have $\pm p = np + w^2q$, i.e., 
\begin{eqnarray}
\label{eq1}
n=-\frac{w^2q}{p} \pm 1. 
\end{eqnarray}

Since we assume that $M_K(n/1)$ is homeomorphic to $L(p,q)$ and $H_1(L(p,q)) \cong \mathbb{Z}_p$,  
we also have the following as in the previous section: 
\begin{eqnarray}
\label{eq5}
n \alpha + w \beta &\equiv& 0\ \  (\bmod\  p), \\
\label{eq6} 
qw \alpha &\equiv& 0\ \  (\bmod\  p), \\
\label{eq7}
p\!\!\!\!  &\not|& \!\!\!\! w. 
\end{eqnarray}

Under the condition above, we further assume $p=4$. 
If $w$ is an odd integer, then $n$ is not an integer by the equation (\ref{eq1}) since $q$ is also an odd integer. 
Hence $w$ is an even integer. 
In particular, $w \equiv 2 \ (\bmod\  4)$ by the expression (\ref{eq7}). 
This implies that $w^2/p$ is an odd integer and hence we see that $n$ is an even integer by the equation (\ref{eq1}). 

On the other hand, $\alpha$ is an even integer by the equation (\ref{eq6}) since $w \equiv 2 \ (\bmod\  4)$ and $(p,q)=1$. 
If $\alpha = 0$, then $w \beta \equiv 0 \ (\bmod\  4)$ by the equation (\ref{eq5}). This implies that $\beta$ is an even integer 
and hence contradicts $(\alpha, \beta)=1$. Therefore $\alpha = 2$ and $\beta$ is an odd integer. 
Then by the equation (\ref{eq5}), 
\[
n \alpha + w \beta \equiv 2n + 2\beta \equiv 0\ \  (\bmod\  4). 
\]
Since $\beta$ is an odd integer, we see that $2n \equiv 2 \ (\bmod\  4)$, i.e., $n$ has to be an odd integer.  
A contradiction occurs. 
\end{proof}

\section*{Acknowledgements}
The authors would like to thank Tetsuya Ito for useful discussions. 
Ichihara is partially supported by JSPS KAKENHI Grant Number 18K03287.

\end{document}